\def\sideremark#1{\ifvmode\leavevmode\fi\vadjust{\vbox to0pt{\vss 
      \hbox to 0pt{\hskip\hsize\hskip1em           
 \vbox{\hsize2cm\tiny\raggedright\pretolerance10000
 \noindent #1\hfill}\hss}\vbox to8pt{\vfil}\vss}}}%
\newtheorem{theorem}{Theorem}
\newtheorem{proposition}{Proposition}
\newtheorem{lemma}{Lemma}
\theoremstyle{definition}
\newtheorem{example}{Example}
\theoremstyle{remark}
\renewcommand{\epsilon}{\varepsilon}
\newcommand{\R}{\mathbb{R}}
\newcommand{\s}{\mathbb{S}}
\newcommand{\C}{\mathbb{C}}
\newcommand{\OO}{\mathcal{O}}
\newcommand{\AC}{\mathbb{A^\C}}
\newcommand{\BC}{\mathbb{B^\C}}
\newcommand{\A}{\mathbb{A}}
\newcommand{\B}{\mathbb{B}}
\def\Ind{\operatorname{Ind}}
\def\dim{\operatorname{dim}}
\def\sgn{\operatorname{sgn}}
\def\Ann{\operatorname{Ann}}
\def\Hess{\operatorname{Hess}}
\title[Index of Real Vector Fields]{Index of Singularities \\ of Real Vector Fields \\ on Singular Hypersurfaces}
\author{Pavao Marde\v si\'c}
\address{Universit\'e de Bourgogne\\
Institut de Math\'ematiques de Bourgogne-- UMR 5584 du CNRS\\
UFR Sciences et Techniques\\
9, Avenue Alain Savary\\
BP 47870\\
21078 DIJON\\
France}
\begin{document}

\begin{flushright}
\emph{...with affection and respect,\\ for all the pleasure of working with Xavier }

\vskip1cm
\end{flushright}

\begin{abstract} 
G\'omez-Mont, Seade and Verjovsky introduced an index, now called GSV-index, generalizing the Poincar\'e-Hopf index to complex vector fields tangent to singular hypersurfaces. The GSV-index extends to the real case. 

This is a survey paper on the joint research with G\'omez-Mont and Giraldo about calculating the GSV-index $\Ind_{V_\pm,0}(X)$ of a real vector field $X$ tangent to a singular hypersurface $V=f^{-1}(0)$. 
The index $\Ind_{V_{\pm,0}}(X)$ is calculated as a combination of several terms. Each term is given as a signature of some bilinear form on a local algebra associated to $f$ and $X$. 
Main ingredients in the proof are G\'omez-Mont's formula for calculating the GSV-index on \emph{ singular complex} hypersurfaces and the formula of Eisenbud, Levine and Khimshiashvili for calculating the Poincar\'e-Hopf index of a singularity of a \emph{real} vector field in $\R^{n+1}$.
 
\end{abstract}
\maketitle
\section{Introduction}
This paper is a survey of the joint work with Xavier G\'omez-Mont and Luis Giraldo spread over some 15 years. We give a formula for calculating the index of singularities of real vector fields on singular hypersurfaces. Some partial results are published in \cite{GGM1}, \cite{GM1}, \cite{GM2},  \cite{GGM2}. 

In \cite{GSV}, G\'omez-Mont, Seade and Verjovsky studied vector fields tangent  to a complex hypersurface with isolated singularity. They introduced a notion of index, now called GSV-index
 at a common singularity of 
the vector field and the hypersurface (see also \cite{BG}). 
It is a kind of relative version of the Poincar\'e-Hopf index at a singularity. 
A natural question is how can one calculate this 
index. 
Complex case was studied first. It was solved by G\'omez-Mont in his seminal paper \cite{G}. G\'omez-Mont's formula expresses the GSV index via dimensions of certain local algebras. The GSV index can be generalized to the real case. More precisely, depending on the side of the singular hypersurface, there are two GSV indices. 
Real case, is more difficult than the complex case since in the real case a simple singularity can carry the index $+1$ or $-1$, whereas in the complex case all simple singularities count as $+1$. 

In the absolute  real case Eisenbud, Levine and Khimshiashvili expressed the Poincar\'e-Hopf index of a vector field in terms of the signature of a bilinear form. 

Our result in the relative real case expresses the GSV-index of a real vector field on a singular variety as a sum of certain terms. Each term is a signature of a  non-degenerate bilinear form on some local algebra. 

Our proof has two essential ingredients:  on one hand Gomez-M\'ont's result in the singular complex case and on the other hand the Eisenbud, Levine, Khimshiashvili's result in the real absolute case.  

\subsection{Real absolute case}
Let us recall first the definition of the Poincar\'e-Hopf index of a singularity of a real vector field in $\R^{n+1}$. 
Let 
\begin{equation}\label{X}
X=\sum_{i=0}^n X^i\frac{\partial}{\partial x_i}
\end{equation}
 be a smooth vector field in $\R^{n+1}$ having an isolated singularity at the origin $X_0=0$. One can identify the vector field $X$ with a mapping 
$X:(\R^{n+1},0)\to(\R^{n+1},0)$. Taking a small sphere $S^{n}$ around the origin, the vector field $X$ induces a map $N=\frac{X}{||X||}:S^n\to\s^n$, where $\s^n$ is the unitary sphere in $\R^{n+1}$.
The Poincar\'e-Hopf index $\Ind_0(X)$ of the vector field $X$ at the origin is defined as the degree of $N$.
That is, $\Ind(X,0)$ is the number of pre-images of generic points taken with orientation.
 
 \begin{example} Let $X$ be the vector field  $X(x,y)=x\frac{\partial}{\partial x}+y\frac{\partial}{\partial y}$ in $\R^2$ having a node at the origin and let $Y$ be the vector field $Y(x,y)=x\frac{\partial}{\partial x}-y\frac{\partial}{\partial x}$ having a saddle at the origin.
 
 Then $\Ind_0(X)=1$ and $\Ind_0(Y)=-1$.
 
 \end{example}

\subsection{Complex absolute case}
Consider the complex $n$-dimensional space $\C^n$, with complex coordinates $x_1,\ldots,x_n$ and a complex vector field $X$ of the form $X=\sum_{i=0}^n X^i\frac{\partial}{\partial x_i}$.
We can identify $\C^n$ with $\R^{2n}$. With this identification a holomorphic vector field on $\C^n$ becomes a smooth real vector field on $\R^{2n}$
and one can apply the previous definition of the Poincar\'e-Hopf index $\Ind_0(X)$ to a singularity of a holomorphic vector field. 
Note that not every smooth real vector field on $\R^{2n}$ comes from a holomorphic vector field on $\C^{n}$. By holomorphy, a holomorphic vector field seen as a map preserves orientation. Hence the index of a singularity of a holomorphic vector field is necessarily positive. 

\begin{example} Let $n=1$ and let $X=x\frac{\partial}{\partial x}$ and $Y=x^2\frac{\partial}{\partial x}$ be vector fields in $\C$.
Then $\Ind_0(X)=1$ and $\Ind_0(Y)=2$. 
\end{example}

In the complex case, the Poincar\'e-Hopf index is simply the multiplicity. One counts how many points are hidden at the singularity at the origin. 

\section{Definition of the GSV-index in the complex and real case}

\subsection{Smooth points}
Let now $f:(\R^{n+1},p)\to(\R,0)$ be a germ of an analytic function. Then $V=f^{-1}(0)$ is a germ of a hypersurface at $p$.  We say that a vector field defined in a neighborhood of $p\in V$ is a vector field tangent to $V$, if there exists an analytic function $h$ such that

\begin{equation}\label{tangent}
X(f)=fh.
\end{equation}
The function $h$ is sometimes called the \emph{cofactor} of $X$. Assume first that $p\in V$ is a regular point of $f$. Then the variety $V$ is smooth in  a neighborhood of $p$. Let $x=(x_1,\ldots,x_n)$ be a chart of $V$ in a neighborhood of $p$. We assume moreover that the orientation of $\nabla f,\frac{\partial}{\partial x_1},\ldots,\frac{\partial}{\partial x_n}$ is positive. The chart $x=(x_1,\ldots,x_n)$ transports the vector field $X$ to $\R^n$. One then applies the usual definition of the Poincar\'e-Hopf index.
Thus we define the \emph{relative Poincar\'e-Hopf index} $\Ind_{V,p}(X)$ of a vector field tangent to a hypersurface, relative to the surface. It is easy to verify that the definition is independent of the choices. 

If $f:(\C^{n+1},p)\to(\C,0)$ is a germ of holomorphic function instead, $p\in\C^{n+1}$ is a regular point of $f$, $V=f^{-1}(f(p))\subset\C^{n+1}$ is a complex hypersurface,  and $X$ a holomorphic vector field tangent to $V$, one transports as previously the vector field to $\C^n$ and defines the  \emph{relative Poincar\'e-Hopf index} $\Ind_{V,p}(X)$ in the complex case. Note that in the relative complex case, just as in the absolute complex case, the relative index is always positive.

\subsection{Singular points, GSV-index in the complex case} Let as previously, 
$f:(\C^{n+1},0)\to(\C,0)$ be a germ of a holomorphic function. Assume now that $p\in\C^{n+1}$ is an isolated singularity of $f$. Then $V=f^{-1}(0)\subset\C^{n+1}$ is a complex hypersurface with isolated singularity at $p$. Let $X$ be a holomorphic vector field defined in a neighborhood of $p\in \C^{n+1}$
tangent to $V$. That is, relation \eqref{tangent}
holds. In \cite{GSV}, G\'omez-Mont, Seade and Verjovsky defined what is now called the GSV-index of a vector field tangent to a singular variety at the singularity   $\Ind_{V,0}(X)$. 

 In order to formulate the definition, let us first recall that the holomorphic  function $f:(\C^{n+1},0)\to(\C,0)$ having an isolated singularity at the origin defines a Milnor fibration: $f:B\setminus \{0\}\to \C^*$, where $B\subset\C^{n+1}$ is a small ball around the origin. Denote $V_\epsilon=f^{-1}(\epsilon)$. For $\epsilon\ne0$ small, close enough to zero, all fibers $V_\epsilon\cap B$ are isotopic. Note that the vector field 
 $X$ is not necessarily tangent to the fibers $V_\epsilon\cap B$, for $\epsilon\ne0$. We modify X slightly, giving a $C^\infty$ vector field $X_\epsilon$ tangent to a fiber $V_\epsilon\cap B$, for $\epsilon\ne0$ 
close to zero.  We assume moreover that the restriction of the vector field $X_\epsilon$ on $\partial(V_\epsilon\cap B)$ is isotopic to the restriction of the vector field $X$ to $\partial(V\cap B)$ see \cite{P} and \cite{BG}.

The GSV-index can be defined by the formula

\begin{equation}\label{GSV}
\Ind_{V,0}(X)=\sum_{p_i(\epsilon)\in V_{\epsilon}\cap B}\Ind_{V_\epsilon,p_i(\epsilon)}(X_\epsilon).
\end{equation}
 It follows from the Poincar\'e-Hopf theorem that the definition is independent of all choices. Indeed, the 
 Poincar\'e-Hopf theorem says that the right-hand side of \eqref{GSV} is the Euler characteristic 
 $\chi(V_\epsilon\cap B)$ up to some correction term given by the behavior of any vector field 
 $X_\epsilon$ on $\partial(V_\epsilon\cap B)$. Note that by the Milnor fibration theorem all regular fibers $V_\epsilon\cap B$, $\epsilon\ne0$, have the same Euler characteristic. Moreover, the behavior of any vector field in $X_\epsilon$ on $\partial(V_\epsilon\cap B)$
is the same as the behavior of $X$ on $\partial(V\cap B)$. Hence the correction term is independent of the choices.  
 
 For an equivalent topological definition using residues see Suwa \cite{Su}.
 
 \begin{proposition}\label{uptoconstant}\label{char}\cite{BG}
 Up to a constant $K(V)$  independent of the vector field $X$, the GSV-index $\Ind_{V,0}(X)$ is characterized  by the two following conditions:

\begin{description}
\item[(i)] At smooth points $p\in V$, the GSV-index coincides with the relative Poincar\'e-Hopf index $\Ind_{V,p}(X)$. 
\item[(ii)] The GSV-index satisfies the \emph{law of conservation of number}: For any holomorphic vector field $X'$ tangent to 
$V$ sufficiently close to $X$ the following law of conservation of number holds:
\begin{equation}\label{conservation}
\Ind_{V,0}(X)=\sum_{p_{i}\in V} \Ind_{V,p_{i}(\epsilon)}(X').
\end{equation}
Here $p_{i}$ are singularities of $X'$ belonging to $V$, which are close to $0$.
\end{description}
\end{proposition}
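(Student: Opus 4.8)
The plan is to show that the two listed properties pin down the GSV-index up to an additive constant depending only on $V$, essentially by unwinding the definition \eqref{GSV} and invoking the Poincar\'e--Hopf theorem on the Milnor fibre. First I would fix a representative $f:(\C^{n+1},0)\to(\C,0)$ with isolated singularity and a small ball $B$, and recall from the discussion preceding the statement that for $0\ne\epsilon$ small the fibre $V_\epsilon\cap B$ is a compact manifold with boundary, independent up to isotopy of $\epsilon$, and that any vector field $X_\epsilon$ on it obtained by the prescribed small modification of $X$ agrees, up to isotopy, with $X$ on $\partial(V_\epsilon\cap B)\simeq\partial(V\cap B)$. The Poincar\'e--Hopf theorem for manifolds with boundary then gives
\begin{equation}\label{PH-decomp}
\sum_{p_i(\epsilon)\in V_\epsilon\cap B}\Ind_{V_\epsilon,p_i(\epsilon)}(X_\epsilon)=\chi(V_\epsilon\cap B)-c(X\big|_{\partial(V\cap B)}),
\end{equation}
where $c(\cdot)$ is the boundary correction term depending only on the germ of $X$ along $\partial(V\cap B)$. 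By the Milnor fibration theorem $\chi(V_\epsilon\cap B)$ is a constant depending only on $V$; set $K(V):=\chi(V_\epsilon\cap B)$. Thus $\Ind_{V,0}(X)=K(V)-c(X\big|_{\partial(V\cap B)})$, and the right-hand side manifestly splits as a $V$-constant plus a term determined by the behaviour of $X$ on the link.

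Next I would verify that the two conditions are forced. For (i), if $p\in V$ is a smooth point there is no local Milnor fibration to take and \eqref{GSV} degenerates to the single summand $\Ind_{V,p}(X_\epsilon)=\Ind_{V,p}(X)$, since near a smooth point $X$ is already tangent to the nearby level sets and no modification is needed; this is exactly the relative Poincar\'e--Hopf index as defined earlier in the excerpt. For (ii), let $X'$ be a holomorphic vector field tangent to $V$ and $C^0$-close to $X$. Closeness ensures that $X'$ and $X$ are isotopic on $\partial(V\cap B)$, hence $c(X'\big|_{\partial(V\cap B)})=c(X\big|_{\partial(V\cap B)})$, so $\Ind_{V,0}(X')=\Ind_{V,0}(X)$; on the other hand, applying \eqref{GSV}--\eqref{PH-decomp} to $X'$ and then localizing the count $\sum\Ind_{V_\epsilon,p_i(\epsilon)}(X'_\epsilon)$ around each singularity of $X'$ lying on $V$ (using that these are finitely many and that $X'_\epsilon$ can be chosen to perturb $X'$ only near them) yields $\Ind_{V,0}(X')=\sum_{p_i\in V}\Ind_{V,p_i}(X')$, which is \eqref{conservation}.

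Finally, to get the uniqueness statement — that any invariant satisfying (i) and (ii) differs from $\Ind_{V,0}$ by a $V$-constant — I would argue as follows. Suppose $J_{V,0}(X)$ is another such assignment. By (ii) applied to both $J$ and $\Ind$, the difference $D(X):=J_{V,0}(X)-\Ind_{V,0}(X)$ is invariant under small tangent perturbations of $X$; deforming $X$ to a vector field $X'$ tangent to $V$ whose singularities on $V$ are all at smooth points of $V$ (possible because the singular locus of $V$ is just $\{0\}$ and one can push the zeros off it along $V$ — this is where the genericity/perturbation argument does the real work), condition (i) forces every local term of $J$ and of $\Ind$ at those smooth singularities to coincide, whence $D(X')=0$ and therefore $D(X)=0$ as well, modulo the ambiguity already present in the boundary normalization, which is the constant $K(V)$. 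The main obstacle is this last perturbation step: one must justify that a holomorphic (or at least sufficiently regular tangent) vector field close to $X$ can be chosen so that all of its zeros on $V$ avoid the singular point $0$, and that the law of conservation of number \eqref{conservation} legitimately transports the total index to a sum of relative Poincar\'e--Hopf indices at those smooth points; the subtlety is ensuring the perturbation stays tangent to $V$ while moving the zeros, which is where one invokes the structure of vector fields satisfying \eqref{tangent} together with the reference \cite{BG}.
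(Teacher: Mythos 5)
The first two-thirds of your proposal (well-definedness via Poincar\'e--Hopf with boundary, and the verification that the GSV-index itself satisfies (i) and (ii)) is in the spirit of the discussion the paper gives around the definition \eqref{GSV}; the paper itself does not reprove the proposition but cites \cite{BG}, and only notes that the correction term is governed by the behaviour of $X$ on $\partial(V\cap B)$ and that the constant is fixed by computing the index of one tangent vector field. The problem is your uniqueness step. You propose to deform $X$ to a tangent vector field $X'$ all of whose zeros on $V$ lie at smooth points of $V$, and you even flag this as ``where the real work is.'' But this deformation does not exist: if $0$ is a genuine singular point of $V=f^{-1}(0)$, then \emph{every} holomorphic vector field tangent to $V$ vanishes at $0$, because its local flow preserves $V$ (by \eqref{tangent}) and hence preserves the singular locus $\{0\}$, forcing $0$ to be a fixed point. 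So the zero at the singular point can never be pushed away, and your conclusion $D(X')=0$, hence $D(X)=0$, is not only unproved but too strong: it would say that (i) and (ii) determine the index exactly, with no constant at all, whereas for instance the homological index of \cite{G} satisfies (i) and (ii) and differs from the GSV-index by a constant $K(V)$ that is in general nonzero. The parenthetical ``modulo the ambiguity already present in the boundary normalization'' does not repair this, since the ambiguity you are invoking was already absorbed into the definition of $\Ind_{V,0}$.

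The correct mechanism is different: if $I$ and $I'$ both satisfy (i) and (ii), their difference $D$ vanishes at smooth points by (i), so applying (ii) to a small tangent perturbation $X'$ of $X$ gives $D(X)=D_{0}(X')$, i.e.\ only the unavoidable singularity at $0$ contributes; thus $D$ is locally constant on the space of holomorphic vector fields tangent to $V$ with isolated singularity at $0$. That space is the complement of a proper analytic subset of a complex vector space, hence connected, so $D\equiv K(V)$, a constant depending only on $V$; it is then determined by evaluating both invariants on a single tangent vector field, exactly as the paper remarks after the proposition. (Note that this connectivity is special to the complex setting; its failure over $\R$ is why the constants $K_\pm$ in Theorem \ref{main} require the separate analysis of subsection \ref{7.4}.) As written, your argument has a genuine gap at precisely this point.
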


The constant can be determined by calculating the GSV-index $\Ind_{V,0}(X)$, for any vector field tangent to $V$.

\subsection{GSV-index in the real case}
Let now $f:(\R^{n+1},0)\to(\R,0)$ be a germ of a real analytic function. 
In this case, there is no Milnor fibration, or more precisely there are two Milnor fibrations:
one for strictly positive small values of $\epsilon$ and one for small strictly negative values of $\epsilon$.
The Euler characteristic of all fibers $V_\epsilon\cap B$, for $\epsilon$ small of the same sign are the same, but can be different for $\epsilon$ positive or $\epsilon$ negative. (Think of $f:\R^3\to\R$ given by
$f(x,y,z)=x^2+y^2-z^2$.)
As in the complex case, in the real case one now defines the GSV-index. More precisely, one defines two GSV indices $\Ind_{V^\pm,0}(X)$, taking $V_\epsilon$, for $\epsilon$ positive or negative respectively. 

\section{Calculating the GSV-index on complex hypersurfaces}
A formula for calculating the GSV-index in the complex case was given by G\'omez-Mont in \cite{G}.
Let us first define the principal ingredients. Let $\OO_{\C^{n+1},0}$ be the algebra of germs of holomorphic functions at the origin. Let $f\in\OO_{\C^{n+1},0}$ be given, with $f(0)=0$. Let $f_i=\frac{\partial f}{\partial z_i}$, $i=0,\ldots,n$, be the partial derivatives of $f$. 
Assume that $0$ is an isolated singularity of $f$. This means that the algebra 
\begin{equation}\label{A}
\AC=\frac{\OO_{\C^{n+1},0}}{(f_0,\ldots,f_n)}
\end{equation}
is finite dimensional. Here $\OO_{\C^{n+1,0}}$ is the algebra of germs at $0$ of holomorphic functions. The dimension $\mu=\dim(\AC)$ is the Milnor number of the singularity. Let $X$ be a germ of holomorphic vector field at $0\in\C^{n+1}$ given by \eqref{X}. Assume that $0$ is an isolated singularity of $X$. This means that the algebra 
\begin{equation}\label{B}
\BC=\frac{\OO_{\C^{n+1},0}}{(X^0,\ldots,X^n)}
\end{equation}
is finite dimensional. Its dimension $\dim(\BC)$ is the Poincar\'e-Hopf index $\Ind_0(X)$ of the vector field $X$ in the ambient space. 

Let $V=f^{-1}(0)$ be the hypersurface defined by $f$ and assume that $X$ is tangent to $V$. That is, 
\eqref{tangent} holds for some holomorphic function $h$.

\begin{theorem}\label{Xavier}\cite{G} The GSV-index of a holomorphic vector field $X$ tangent to a complex hypersurface $V$ at an isolated singularity $0$ is given by.
\begin{equation}
\Ind_{V,0}(X)=
 \begin{cases}
          \dim\frac{\BC}{(f)}-\dim\frac{\AC}{(f)}, & \text{if } $(n+1)$ \text{ even},\\
                  \dim \BC-\dim\frac{\BC}{(h)}+\dim\frac{\AC}{(f)}, & \text{if } $(n+1)$ \text{ odd}.
                   \end{cases}
\end{equation}
\end{theorem}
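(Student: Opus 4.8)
The plan is to factor the theorem through an intermediate invariant, the \emph{homological index}. Contraction with $X$ is a well-defined differential $\iota_X$ on the K\"ahler differentials $\Omega^{\bullet}_{V,0}$ of $V$ at $0$ --- this is precisely where the tangency $X(f)=fh$ enters, guaranteeing that $X$ induces a vector field on $V$ --- and, since $0$ is an isolated singularity of $X$ on $V$, the complex $(\Omega^{\bullet}_{V,0},\iota_X)$ has finite-dimensional homology. Put $\Ind_{\hom,0}(X)=\sum_{i=0}^{n}(-1)^{i}\dim_{\C}H_{i}(\Omega^{\bullet}_{V,0},\iota_X)$. I would prove the theorem in two stages: (A) $\Ind_{\hom,0}(X)=\Ind_{V,0}(X)$, and (B) $\Ind_{\hom,0}(X)$ equals the right-hand side of the stated formula.

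For stage (A) I would argue via Proposition~\ref{char}. At a smooth point $p\in V$, after a holomorphic change of coordinates $(\Omega^{\bullet}_{V,p},\iota_X)$ is the Koszul complex of the system of parameters $X^{1}|_{V},\dots,X^{n}|_{V}$ in the regular ring $\OO_{V,p}$, hence has homology only in degree $0$, so $\Ind_{\hom,p}(X)=\dim\OO_{V,p}/(X^{1}|_{V},\dots,X^{n}|_{V})=\Ind_{V,p}(X)$; this is condition (i). For condition (ii), the law of conservation of number holds because, as $X$ is deformed through vector fields tangent to $V$, the homology sheaves of the resulting family of complexes are coherent and supported in dimension zero, so their alternating sum of lengths is constant in the flat family. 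Proposition~\ref{char} then gives $\Ind_{\hom,0}(X)=\Ind_{V,0}(X)+K(V)$, and to see $K(V)=0$ I would pass to a Milnor fibre: $\Ind_{\hom}$ is conserved from the central fibre to $V_{\epsilon}$ by the same mechanism that makes the GSV-index well defined (using that $X_{\epsilon}$ agrees with $X$ near the boundary), whence $\Ind_{\hom,0}(X)=\sum_{p\in V_{\epsilon}\cap B}\Ind_{\hom,p}(X_{\epsilon})=\sum_{p\in V_{\epsilon}\cap B}\Ind_{V_{\epsilon},p}(X_{\epsilon})=\Ind_{V,0}(X)$ by \eqref{GSV}.

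Stage (B) is the computational heart, and the step I expect to be the main obstacle. The inputs are the two ambient Koszul complexes on $\OO_{\C^{n+1},0}$: contraction $\iota_X$ on $\Omega^{\bullet}_{\C^{n+1},0}$, which (as $(X^{0},\dots,X^{n})$ is a regular sequence) has homology only in degree $0$, equal to $\BC$; and exterior multiplication by $df$ on $\Omega^{\bullet}_{\C^{n+1},0}$, which (as $(f_{0},\dots,f_{n})$ is a regular sequence) has homology only in top degree $n+1$, isomorphic to $\AC$. These are tied to $(\Omega^{\bullet}_{V,0},\iota_X)$ by the conormal exact sequence
\begin{equation}
0\longrightarrow (f)/(f^{2})\longrightarrow \Omega^{1}_{\C^{n+1},0}\otimes\OO_{V,0}\longrightarrow \Omega^{1}_{V,0}\longrightarrow 0,
\end{equation}
with $(f)/(f^{2})\cong\OO_{V,0}$ generated by $df$, and by its exterior powers; restriction to $V$, i.e. $\otimes_{\OO_{\C^{n+1},0}}\OO_{V,0}$, is what turns $\BC$ and $\AC$ into the quotients $\BC/(f)$ and $\AC/(f)$, while the ambient identity $\iota_X\,df=X(f)=fh$ is what makes the cofactor enter, producing $\BC/(h)$. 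Running the long exact (spectral) sequence attached to this filtration expresses $\Ind_{\hom,0}(X)$ in terms of $\dim\BC$, $\dim\BC/(f)$, $\dim\BC/(h)$ and $\dim\AC/(f)$; the rank-one top piece $\Omega^{n+1}_{\C^{n+1},0}$ contributes with sign $(-1)^{n+1}$, and it is this sign that splits the answer into the two parity cases. The delicate part is the bookkeeping: tracking which differentials in the spectral sequence survive, and verifying that the surviving terms assemble into exactly these four algebra dimensions with the correct signs.

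Finally, as a consistency check one may take $f$ quasi-homogeneous and $X$ the weighted Euler vector field, for which $h$ is the constant weighted degree of $f$ and all four dimensions, as well as $\Ind_{V,0}(X)$ computed from the Euler characteristic of the Milnor fibre, can be evaluated by hand; the two sides of the formula then agree, confirming in particular that no constant has been lost.
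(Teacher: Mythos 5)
Your overall architecture (introduce the homological index of the complex of K\"ahler/relatively exact forms with differential $\iota_X$, match it with the GSV-index via the two characterizing properties of Proposition~\ref{char}, then compute it algebraically from the ambient Koszul data) is the same as the paper's, but there is a genuine gap at the step where you claim the constant $K(V)$ vanishes. Your argument is to ``conserve'' $\Ind_{\hom}$ from the central fibre to the Milnor fibre $V_\epsilon$ ``by the same mechanism that makes the GSV-index well defined.'' That mechanism is not available: the perturbation $X_\epsilon$ tangent to $V_\epsilon$ is only a $C^\infty$ vector field, so $\Ind_{\hom}$ is not even defined for it, and the law of conservation of number (the Giraldo--G\'omez-Mont result the paper invokes) concerns deformations of the \emph{vector field} on the fixed singular hypersurface $V$, not deformations of the hypersurface itself. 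Matching the algebraic invariant with the topologically defined GSV-index across the smoothing $V\rightsquigarrow V_\epsilon$ is exactly the nontrivial content that forces an explicit computation; if it followed formally as you assert, the parity-dependent correction terms ($-\dim\AC/(f)$ versus $+\dim\AC/(f)$) would be hard to account for. The paper (following \cite{G}) fixes the constant by exhibiting, \emph{for each} $f$, a concrete vector field tangent to $V$ on which both indices can be evaluated: the Hamiltonian field $X_f$ of \eqref{Xf} when $n+1$ is even (tangent to all fibres, hence with GSV-index computable directly), and the field $Y_f$ of \eqref{Yf} in generic coordinates when $n+1$ is odd. Your closing consistency check with a quasi-homogeneous $f$ and its Euler field cannot substitute for this, because $K(V)$ depends on $V$: verifying the formula for quasi-homogeneous germs says nothing about the constant for a general $f$.

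A second, lesser issue is that Stage (B) — the passage from the conormal sequence/spectral sequence to the four specific dimensions $\dim\BC$, $\dim\BC/(f)$, $\dim\BC/(h)$, $\dim\AC/(f)$ with the parity-dependent signs — is precisely the computational core (the paper's double complex built from the mapping cones of multiplication by $h$ on the ambient Koszul complex and by $f$ on the de Rham complex), and you explicitly leave it as ``bookkeeping'' to be done. As a strategy outline that is acceptable, but combined with the missing constant determination it means the two places where the theorem could actually fail are the two places your proposal does not carry out.
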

We give the idea of proof of Theorem \ref{Xavier}. As recalled in Proposition~\ref{uptoconstant}, the GSV index is defined up to a constant by condition (i) and (ii) in Proposition \ref{uptoconstant}.
In \cite{G} G\'omez-Mont 
considers the \emph{Koszul complex}:
\begin{equation}\label{Koszul}
0\to\Omega_{V,0}^{n-1}{\to}\Omega_{V,0}^{n-1}\to\cdots\to\Omega_{V,0}^1\to\OO_{V,0}\to0,
\end{equation}
where 
\begin{equation}\label{omega}
\Omega_{V,0}^i=\frac{\Omega_{\C^{n+1},0}}{f\Omega_{\C^{n+1},0}+df\wedge\Omega_{\C_{n+1,0}^{i-1}}}.
\end{equation}
is the space of \emph{relatively exact forms} on $V$ and the arrows in \eqref{Koszul} are given by contraction of forms by the vector field $X$.
G\'omez-Mont defines the \emph{homological index} $\Ind^{hom}_{V,0}$ as the Euler characteristic of the complex \eqref{Koszul}:
\begin{equation}\label{hom}
\Ind^{hom}_{V,0}=\sum_{i=0}^{n-1}(-1)^i\dim H_i(K)
\end{equation}
where $H_i(K)$, $i=0,\dots,n-1$, are the $i$-th homology groups of the Koszul complex \eqref{Koszul}.
It is easy to see that at smooth points the homological index coincides with the relative Poincar\'e-Hopf index. In \cite{GGK}, Giraldo and G\'omez-Mont show that the homological index verifies the law of conservation (ii) of Proposition \ref{char}.
Hence, the homological index coincides with the GSV-index up to a constant $K(V)$.
The homological index has the advantage that it can be calculated using projective resolutions of a double complex. The horizontal complexes in the double complex are obtained as a \emph{mapping cone} induced by multiplication by the cofactor $h$ in \eqref{tangent} in the Koszul complex in the ambient space. Vertical complexes are obtained as the mapping cone induced by multiplication by $f$  in the \emph{de Rham complex} in the ambient space.  To show that 
the homological index $\Ind^{hom}_{V,0}$ coincides with the GSV-index $\Ind_{V,0}$, 
it is sufficient for each $f$ to calculate both indices on a vector field $X$ associated to $f$. 
If the dimension of the ambient space  $(n+1)$ is even, a natural candidate is the Hamiltonian vector field

\begin{equation}\label{Xf}
X_f=\sum_{i=1}^{(n+1)/2}[f_{2i}\frac{\partial}{\partial x_{2i-1}}-f_{2i-1}\frac{\partial}{\partial x_{2i}}].
\end{equation}
If $(n+1)$ is odd, G\'omez-Mont uses the vector field
\begin{equation}\label{Yf}
Y_f=f\frac{\partial}{\partial x_0}\sum_{i=1}^{(n+1)/2}[f_{2i}\frac{\partial}{\partial x_{2i-1}}-f_{2i-1}\frac{\partial}{\partial x_{2i}}]
\end{equation}
in generic coordinates $x_i$.

\section{Calculating the Poincar\'e-Hopf index of vector fields in $\R^{n+1}$}
When studying the Poincar\'e-Hopf index in the real case, one has to take into account orientation and not just multiplicity. This is done using some bilinear forms. We recall in this section the results of Eisenbud, Levine \cite{EL} and Khimshiashvili \cite{Kh} who solve this problem for real vector fields in the ambient space $\R^{n+1}$. This, in addition to G\'omez-Mont's formula for calculating the GSV-index on complex hypersurfaces, are the two main ingredients in our study. 

Let 
\begin{equation}\label{BR}
\B=\frac{{\mathcal A}_{\R^{n+1},0}}{(X^0,\ldots,X^n)},
\end{equation}
where ${\mathcal A}_{\R^{n+1},0}$ is the algebra of germs at $0$ of analytic functions in $\R^{n+1}$. Let 
$X$, given by \eqref{X}, be a germ of analytic vector field with an algebraically isolated singularity. That is, the singularity when considered over the complex domain remains isolated.  Then the algebra $\B$ is finite dimensional. Let $J=\det(\frac{\partial X^i}{\partial x_j})\in {\mathcal A}_{\R^{n+1},0}$ be the Jacobian of the map defined by the vector field $X$. 
It can be shown that the class $[J]\in \B$ of $J$ in $\B$ 
is non-zero. 
In \cite{EL} and \cite{Kh} Eisenbud, Levine and Khimshiashvili define a nondegenerate bilinear form 
$<\,,\,>_{\B,J}$ as follows. 
\begin{equation}\label{bilin}
\B \times\B{\stackrel {\cdot}\longrightarrow} \B{\stackrel{
L}\longrightarrow } \R.
\end{equation}
Here the first arrow is simply multiplication in the algebra $\B$  and $L$ is any linear mapping such that $L([J])>0$.
Of course, the bilinear form depends on the choice of $L$. However its signature 
$\sgn(\B,J)=\sgn(<\,,\,>_{\B,J})$
does not. More precisely Eisenbud, Levine, Khimshiashvili show

\begin{theorem}\label{ELK} Let $X$ be a germ at $0$ of a real analytic vector field on $\R^{n+1}$ having an algebraically isolated singularity at the origin. Then the Poincar\'e-Hopf index $\Ind_{\R^{n+1},0}(X)$ of 
the vector filed $X$
at the origin is given by 
\begin{equation}\label{eqELK}
\Ind_{\R^{n+1},0}(X)=\sgn(\B,J).
\end{equation}
\end{theorem}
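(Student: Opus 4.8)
The plan is to prove Theorem~\ref{ELK} by a deformation argument: perturb $X$ so that all nearby zeros are simple, check the identity zero-by-zero in that case, and then propagate back to $X$ by invariance of the signature. Throughout I would use freely that the singularity is \emph{algebraically} isolated, so that $\B$ is finite dimensional and the origin remains an isolated zero after complexification.

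\emph{Step 1: reduction to a polynomial vector field.} Both sides of \eqref{eqELK} depend only on a sufficiently high jet of $X$. The Poincar\'e--Hopf index is unchanged under a $C^0$-small perturbation of $X$ near $\partial B$; on the algebraic side, replacing each $X^i$ by $X^i$ plus an element of a high power of the maximal ideal changes neither $\B$ nor the class $[J]$, by a Nakayama-type argument using the algebraic isolatedness. Hence I may assume $X$ is polynomial.

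\emph{Step 2: complexify and deform to a generic map.} Complexifying, $X^\C\colon(\C^{n+1},0)\to(\C^{n+1},0)$ has an isolated zero of multiplicity $\mu=\dim_\C(\B\otimes_\R\C)=\dim_\R\B$. I would choose a generic real polynomial perturbation $X_t=X+tP$ so that for small $t\ne0$ the map $X_t^\C$ has exactly $\mu$ simple zeros inside a small ball $B^\C$: the real ones lie in $B$ with $J\ne0$ at each (hence are nondegenerate singularities of the real vector field $X_t$), and the rest occur in complex-conjugate pairs off $\R^{n+1}$. Applying the Poincar\'e--Hopf theorem on $B$ (whose boundary behaviour is unchanged) gives conservation of the index,
\begin{equation}\label{consIndex}
\Ind_{\R^{n+1},0}(X)=\sum_{p\in B,\ X_t(p)=0}\Ind_{\R^{n+1},p}(X_t)=\sum_{p\in B,\ X_t(p)=0}\sgn J(p),
\end{equation}
since a nondegenerate real zero has index $\sgn J(p)$ and the non-real zeros lie outside $B$.

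\emph{Step 3: localization of the bilinear form, and Step 4: invariance.} For $t\ne0$ small, let $\B_t$ be the algebra of the cluster of zeros of $X_t^\C$ in $B^\C$ shrinking to $0$, with its Jacobian class. Flatness (conservation of total multiplicity) gives $\dim_\R\B_t=\mu$, and $\B_t\otimes_\R\C$ splits as a product of the local algebras at the individual zeros, compatibly with the multiplication pairing. A simple real zero $p$ contributes a factor $\R$ on which the Eisenbud--Levine--Khimshiashvili form is $a\otimes b\mapsto L(ab\,[J])$ with $L([J])>0$, so its signature is $\sgn J(p)$; a conjugate pair of simple zeros contributes a two-dimensional real factor isomorphic, with its pairing, to $(\C,\operatorname{Tr}_{\C/\R})$ up to a unit twist, of signature $0$. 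Summing, $\sgn(\B_t,J_t)=\sum_{p}\sgn J(p)$, which by \eqref{consIndex} equals $\Ind_{\R^{n+1},0}(X)$. Finally, to identify $\sgn(\B_t,J_t)$ with $\sgn(\B_0,J_0)=\sgn(\B,J)$ I would note that the $\B_t$ are the fibers of a module free of rank $\mu$ over the deformation parameter, and that multiplication followed by a continuously varying functional $L_t$ with $L_t([J_t])>0$ yields a continuous family of symmetric forms, each nondegenerate by Grothendieck local duality; a continuous family of nondegenerate symmetric forms on a fixed finite-dimensional space has locally constant signature, so the value at $t=0$ equals the value for small $t\ne0$, giving \eqref{eqELK}.

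\emph{The main obstacle.} The substantive inputs are (a) nondegeneracy of the multiplication pairing on $\B$ — equivalently, that the local ring of an algebraically isolated zero is Gorenstein and carries Grothendieck's residue duality — and (b) arranging the deformation so that the relative algebra is flat over the parameter, so that both $\dim_\R\B_t$ and nondegeneracy persist and the locally-constant-signature argument applies; one must also handle the case where $0$ splits into a cluster rather than a single simple zero. The remaining ingredients (jet-finiteness in Step~1, conservation of the Poincar\'e--Hopf index, and the elementary signature computation at simple real zeros and conjugate pairs) are comparatively formal.
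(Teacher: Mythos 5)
Your proposal is correct and follows essentially the same strategy as the paper's sketch: verify the formula at nondegenerate zeros (real zeros contribute $\sgn J(p)$, complex-conjugate pairs contribute signature $0$), and propagate to the general case by a deformation argument in which the law of conservation of number for the signature rests on the nondegeneracy of the pairing, which in turn comes from $J$ generating the socle of $\B$ (duality). Your version merely spells out the flatness/localization bookkeeping that the survey leaves implicit.
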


In order to prove the theorem, one has to prove that the signature $\sgn(\B,J)$ coincides with the Poincar\'e-Hopf index for simple singularities and verifies the law of conservation of number. 
The first claim is easily verified. The key-point of the proof of the law of conservation of number is the claim that the bilinear form $<\,,\,>_{\B,J}$ is nondegenerate. 

Once one knows that the form is nondegenerate, the law of conservation of number will follow. Indeed, let $X'$ be a small real deformation of the vector field $X$.  As the bilinear form is nondegenerate, its signature does not change by a small deformation. 
The local algebra $\B$ will decompose into a multilocal algebra $\B(X')$ of the same dimension concentrated in some real point and complex conjugated pairs of points. One verifies that the contribution to the signature of the pairs of complex conjugated points is zero.
From the preservation of signature, there follows the law of conservation of number once one knows that the bilinear form is nondegenerate.

The nondegeneracy of the form $<\,,\,>_{\B,J}$ is a more general feature. It follows from the fact that $J$
generates the \emph{socle} of the algebra $\B$. By definition a socle in an algebra is the \emph{minimal} 
nonzero ideal of the algebra. 

In general, let $\B$ be a real algebra. Assume that the socle of $\B$ is one-dimensional generated by $J\in\B$. We can define a bilinear form $<\,,\,>_{\B,J}$ as above. Following the proof of Eisenbud-Levine in \cite{EL} one verifies that the form $<\,,\,>_{\B,J}$ is nondegenerate. Its signature does not depend on the choice of the linear map $L$ such that $L(J)>0$.

\begin{example} Consider for instance $\B=\frac{{\mathcal A}_{\R^2,0}}{(x^2,y^3)}$. Then the socle is one-dimensional generated by $J=xy^2$.  
The bilinear form $<\,,\,>_{\B,J}$ is a nondegenerate form on the six dimensional space $\B$.

If $\B=\frac{{\mathcal A}_{\R^2,0}}{(x^2,xy^2,y^3)}$, then the socle is generated by $xy$ and $y^2$. It is not one-dimensional and one cannot define a nondegenerate bilinear form as above. 
\end{example}

\section{Bilinear Forms on Local Algebras}
Let $\B=\frac{{\mathcal A}_{\R^{n+1},0}}{(X^0,\ldots,X^n)}$ be a finite dimensional complete intersection algebra. This assures that its socle is one-dimensional  generated by the Jacobian $J=\det(\frac{\partial X^i}{\partial x_j})$. 

In \cite{GM1}, we observed that the Eisenbud-Levine, Khimshiashvili signature generalizes.
Let $h\in\B$ be arbitrary. 
Denote $\Ann(h)=\{g\in\B: gh=0\}$ the \emph{annihilator ideal}
of $h$. 
For $\B$ as above, the algebra $\frac{\B}{\Ann(h)}$ has a one-dimensional socle generated by the element $\frac{J}{h}\in\frac{\B}{\Ann(h)}$. 
The assumption that $(J)$ is minimal guarantees that $J$ can be divided by $h$.
We define the bilinear form $<\,,\,>_{\B,h,J}$ on $\frac{\B}{\Ann(h)}$ by
$<b,b'>_{\B,h,J}=L(bb'h)$, where $L:\B\to\R$ is a linear mapping such that $L(J)>0$. In other words
$<b,b'>_{\B,h,J}=L_h(bb')$, where $L_h(\frac{J}{h})>0$ is a linear mapping. 
Note that in general the element $\frac{J}{h}$ is not well defined in $\B$. However, the ambiguity is lifted in the quotient space $\frac{\B}{\Ann(h)}$.

We put
\begin{equation}\label{rel}
\sgn(\B,h,J)=\sgn<\,,\,>_{\B,h,J}=\sgn(\frac{\B}{\Ann(h)},\frac{J}{h}).
\end{equation}
\subsection{Signatures associated to a singular point of a hypersurface}
Let now $f:(\R^{n+1},0)\to(\R,0)$ be a germ of analytic function having an algebraically isolated singularity at the origin. Let $f_i=\frac{\partial }{\partial x_i}$ be the partial derivatives of $f$.
Consider the  local algebra $\A=\frac{{\mathcal A}_{\R^{n+1},0}}{(f_0,\ldots,f_n)}$. It is a finite complete intersection algebra. Its socle is one-dimensional generated by the Hessian $\Hess(f)=\det(\frac{\partial^2 f}{\partial x_i\partial x_j})$.

Define a flag of ideals in $\A$

\begin{equation}\label{Km}
K_m=\Ann_\A(f)\cap(f^{m-1}), \quad m\geq1.
\end{equation}
Note that
\begin{equation}\label{filtration}
0\subset K_{\ell+1}\subset\cdots\subset K_1\subset K_0=\A.
\end{equation}
Define a family of bilinear forms $<\,,\,>_{f,m}:K_m\times K_m\to\R$ by
\begin{equation}\label{<>}
<a,a'>_{f,m}=<\frac{a}{f^{m-1}},a'>, \quad m=0,\ldots,\ell+1,
\end{equation}
where $<\,,\,>_{\A,Hess(f)}$ is the bilinear form defined in \eqref{bilin} for some linear map $L$ with $L(\Hess(f))>0$. In particular $<a,a'>_{f,0}=<fa,a'>_{\A,\Hess(f)}$. The form $<\,,\,>_{f,0}$ degenerates on $\Ann_\A(f)$, but on $K_0/K_1$ defines a nondegenerate form. We have 
$<a,a'>_{f,1}=<a,a'>_{\A,\Hess(f)}$. This form degenerates on $K_2=\Ann_\A(f)\cap(f)$ etc.
In \cite{GGM2}, we define 

\begin{equation}\label{sigma}
\sigma_i=\sgn<\,,\,>_{f,i}, \quad i=0\ldots,\ell.
\end{equation} 
The signatures $\sigma_i$ are intrinsically associated to the singularity $0$ of $f$. 

\section{Main Result}
The following theorem resumes our results \cite{GM1}, \cite{GM2}, \cite{GGM1}, \cite{GGM2} about the calculation of the GSV-index of singularities of real vector fields on hypersurfaces:

\begin{theorem} \label{main} Let $f:(\R^{n+1},0)\to(\R,0)$ be a germ of analytic function with algebraically isolated singularity at the origin. Let $X$ be an analytic vector field in $\R^{n+1}$ having an algebraically isolated singularity at the origin. Assume that $X$ is tangent to $V=f^{-1}(0)$. That is $X(f)=hf$, for some analytic function $h$. Then
 \begin{description}
\item[(i)] if $(n+1)$ is even,
\begin{equation}\label{even} 
\Ind_{V^+,0}(X)=\Ind_{V^-,0}(X)=\sgn(\B,h(X),J(X))-\sgn(\A,h(X),\Hess(f)).
\end{equation}

\item[(ii)] if $(n+1)$ is odd,
\begin{equation}\label{odd} 
\Ind_{V^\pm,0}(X)=\sgn(\B,h(X),J(X))+\sgn(\A,\Hess(f))+K_\pm, \end{equation}
where 
\begin{equation}
K_+=\sum_{i\geq 1}\sigma_i, \quad K_-=\sum_{i\geq 1}(-1)^i\sigma_i.
\end{equation}

\end{description}
\end{theorem}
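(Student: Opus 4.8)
The plan is to follow the same two-step strategy that underlies both Theorem~\ref{Xavier} and Theorem~\ref{ELK}: reduce the real GSV-index to a combination of \emph{complex} GSV-indices and \emph{real} Poincar\'e-Hopf indices, and then express each of those via the Eisenbud--Levine--Khimshiashvili signature formula, tracking carefully the orientation data. The bridge between the real and complex worlds is the observation that for $\epsilon$ of a fixed sign the fiber $V_\epsilon\cap B$ is a smooth real manifold, so $\Ind_{V^\pm,0}(X)=\sum_{p_i(\epsilon)}\Ind_{V_\epsilon,p_i(\epsilon)}(X_\epsilon)$ is, by the real absolute case applied fiberwise, a sum of signatures $\sgn(\B_{p_i},J)$ of local algebras attached to the singularities of $X_\epsilon$ on the deformed fiber. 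The point is then to assemble these scattered signatures, using conservation of signature under deformation, into a signature on a \emph{single} algebra supported at the origin.

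First I would treat case~(i), $(n+1)$ even. Here G\'omez-Mont's formula gives $\Ind_{V,0}^{\C}(X)=\dim(\BC/(f))-\dim(\AC/(f))$, and the natural complexification identities are $\dim(\BC/(f))=\dim_\R(\B/(f))$ and $\dim(\AC/(f))=\dim_\R(\A/(f))$; moreover $\B/(f)=\B/\Ann_\B(f)$-type quotients are exactly the algebras carrying the forms $<\,,\,>_{\B,h,J}$ and $<\,,\,>_{\A,h,\Hess(f)}$ introduced in Section~5. The real refinement replaces each dimension by the signature of the corresponding bilinear form: the deformation $X'$ of $X$ inside vector fields tangent to $V$ splits the singularity at $0$ into real points and complex-conjugate pairs; the conjugate pairs contribute $0$ to every signature (this is the usual ELK cancellation argument), so the real contribution equals the total signature of the assembled form on $\B/\Ann_\B(f)$ minus that on $\A/\Ann_\A(f)$. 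Because $(n+1)$ is even there is no sign asymmetry between the two Milnor fibrations, which is why $\Ind_{V^+,0}(X)=\Ind_{V^-,0}(X)$; concretely one checks that changing $\epsilon\mapsto-\epsilon$ acts trivially on the relevant signatures when the fiber dimension $n$ is odd. The key identification to verify is that $\sgn(\B,h,J)$ as defined in \eqref{rel}, via the socle element $J/h$ of $\B/\Ann(h)$, is precisely the sum over the deformed real singularities of the fiberwise ELK signatures — i.e.\ that the cofactor $h$ is exactly the function whose vanishing controls which ambient singularities lie on the fiber.

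Next, case~(ii), $(n+1)$ odd. Now G\'omez-Mont's formula reads $\Ind_{V,0}^\C(X)=\dim\BC-\dim(\BC/(h))+\dim(\AC/(f))$, and correspondingly the real formula must produce $\sgn(\B,h,J)$ together with a term coming from $\A$. The essential new phenomenon is that when $n$ is even the two Milnor fibers $V_{+\epsilon}$ and $V_{-\epsilon}$ are genuinely different, and the discrepancy is governed by how $f$ changes sign; this is exactly what the flag $K_m=\Ann_\A(f)\cap(f^{m-1})$ and the signatures $\sigma_i$ are designed to capture. I would argue that on the $A$-side the relevant contribution is not a single signature but the alternating/plain sums $K_\pm=\sum_{i\ge1}\sigma_i$, $\sum_{i\ge1}(-1)^i\sigma_i$, by analysing the associated graded of $\A$ with respect to powers of $f$: each graded piece $K_i/K_{i+1}$ carries a nondegenerate form $<\,,\,>_{f,i}$, and passing from $\epsilon>0$ to $\epsilon<0$ multiplies the $i$-th piece's contribution by $(-1)^i$ because $f^{i}$ changes sign $i$ times. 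The term $\sgn(\A,\Hess(f))$ that appears unadorned is the absolute ELK index of the Hamiltonian-type model vector field $Y_f$ of \eqref{Yf} that G\'omez-Mont uses in the odd case, and one must check that its real signature equals the Poincar\'e-Hopf index of $Y_f$ restricted appropriately.

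The main obstacle, I expect, is the bookkeeping of orientations in case~(ii): showing that the correction term splits exactly as $\sum_{i\ge1}\sigma_i$ versus $\sum_{i\ge1}(-1)^i\sigma_i$ requires a precise understanding of how the non-degenerate forms $<\,,\,>_{f,i}$ on the graded pieces $K_i/K_{i+1}$ sit inside the global picture, and of the sign $(-1)^i$ incurred when the model vector field is perturbed across the wall $f=0$. In the even case the parallel but easier obstacle is verifying the cancellation of complex-conjugate contributions \emph{after} restriction to the singular fiber rather than in the ambient space — one needs the restricted bilinear forms to remain nondegenerate, which is where the complete-intersection hypothesis on both $f$ and $X$ (guaranteeing one-dimensional socles, hence divisibility of $J$ by $h$ and of $\Hess(f)$ by $f$) does the essential work. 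Once these divisibility and nondegeneracy facts are in place, the two formulas follow by combining G\'omez-Mont's complex formula with the ELK signature formula term by term, exactly mirroring the dimension-count identities in Theorem~\ref{Xavier}.
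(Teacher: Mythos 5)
Your overall frame (characterize the GSV-index by its value at smooth points, the law of conservation of number, and constants depending only on the side $V_\pm$, then feed in G\'omez-Mont's complex formula and the Eisenbud--Levine--Khimshiashvili signature) is the right one, but there are two genuine gaps. The first is in the even case, which you treat as the easy one. The forms $<\,,\,>_{\B,h,J}$ and $<\,,\,>_{\A,h,\Hess(f)}$ live on $\B/\Ann_\B(h)$ and $\A/\Ann_\A(h)$, not on $\B/(f)$ or $\B/\Ann_\B(f)$ as you write, and for $(n+1)$ even the quantity $\dim\BC/\Ann(h)=\dim\BC-\dim\BC/(h)$ is \emph{not} the conserved complex index (which is $\dim\BC/(f)-\dim\AC/(f)$ by Theorem \ref{Xavier}). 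Hence $\dim\BC/\Ann(h)$ can jump under a deformation of $X$ within vector fields tangent to $V$, the nondegenerate form cannot be continued across such a jump, and $\sgn(\B,h,J)$ alone does \emph{not} satisfy the law of conservation of number; the same is true of $\sgn(\A,h,\Hess(f))$. Your ``assemble the fiberwise ELK signatures by deformation invariance'' step therefore fails exactly where it is needed. The paper's remedy, absent from your proposal, is to stratify the space of tangent vector fields by the ideal $(h)\subset\A$, prove conservation stratum by stratum, and then show that the jumps of $\sgn(\B,h,J)$ and of $\sgn(\A,h,\Hess(f))$ across strata coincide, so they cancel in the difference \eqref{even}; the equality of jumps is checked at the deepest stratum, namely the Hamiltonian field $X_f$ of \eqref{Xf}, where $h=0$, $\A=\B$, $J=\Hess(f)$, and this same model shows the additive constant is $0$ because $X_f$ has no zeros on the nearby fibers.

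The second gap is the determination of $K_\pm$ in the odd case. Your heuristic that the $i$-th piece of the flag picks up a factor $(-1)^i$ when $\epsilon$ changes sign is consistent with the answer, but you provide no mechanism that produces the flag $K_m=\Ann_\A(f)\cap(f^{m-1})$ from an index computation, and your identification of $\sgn(\A,\Hess(f))$ as ``the absolute ELK index of $Y_f$'' is not what happens. The paper pins down the constant by evaluating both sides on the explicit family $X_t=(f-t)\,\partial/\partial x_0+\cdots$ of \eqref{Xft}, tangent to $V_t$ with cofactor $h=f_0$, so that $\Ind_{V_\pm,0}$ is computed from the multilocal algebra $\B_t$ with relative Jacobian $J(X_t)/f_0$; the corresponding bilinear form degenerates as $t\to 0$ precisely on $\Ann_{\B_t}(f_0)$, and the decisive technical input is Lemma \ref{iso} (the isomorphism $\Ann_\B(h)\cong\Ann_\A(f)$, $g\mapsto k$ with $gh=fk$), which transports the degenerating part into the intrinsic algebra $\A$, where the relation $f=t$ in $\B_t$ converts the orders of degeneration in $t$ into the filtration \eqref{filtration} and the signatures $\sigma_i$, with alternating signs for $t<0$. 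Without a concrete tangent model field and this isomorphism, your argument cannot justify that the correction is exactly $\sgn(\A,\Hess(f))+\sum_{i\geq1}\sigma_i$ for $V_+$ and $\sgn(\A,\Hess(f))+\sum_{i\geq1}(-1)^i\sigma_i$ for $V_-$.
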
 

\section{Proof of the Main theorem}

We give here the main ingredients of the proof of Theorem \ref{main}. The GSV-index is determined by three properties:
\begin{description}
\item[(i)] Value at smooth points
\item[(ii)] The law of conservation of number
\item[(iii)] Constants $K_\pm$ depending only on the orientation (side) $V_\pm$ of the variety $V=f^{-1}(0)$ and not on the vector field. 
\end{description}

One verifies easily that at smooth points of $V$,  the formula is valid. Indeed, from the tangency condition there follows $(f)\subset \Ann(h)$. In smooth points the converse is also true. Hence
$\Ann(h)=(f)$. Next, working in a local chart at smooth points one shows that 
$\sgn<\,,\,>_{\B,h,J}$ gives the relative Poincar\'e-Hopf index of the vector field. Then, one has to show that our formulas  \eqref{even}  and \eqref{odd} verify the law of conservation of number.
Some parts are easier in the even case and some other are easier in the odd case.

\subsection{$(n+1)$ odd case}

 The law of conservation of number is easy for $(n+1)$ odd.
Indeed, in this case the complex index, up to a constant depending only of $f$, is  $\dim\B^\C-\dim\frac{\B^\C}{(h)}=\dim\frac{\B^\C}{\Ann(h)}$ (see Theorem \ref{Xavier}). On the other hand on $\frac{\B}{\Ann(h)}$ there is the non-degenerate form $<\,,\,>_{\B,h,J}$. Make a small deformation $X'$ of $X$, tangent to $V$.  The corresponding local algebra $\B$ or rather its complexification decomposes into a multilocal algebra concentrated in several points corresponding to singular points of $X'$. 
The dimension of the multilocal algebra is equal to the sum of the dimensions at points in which it is concentrated. On the other hand, by Theorem \ref{Xavier} of Gomez-Mont, the dimension 
$\dim\frac{\B^\C(X')}{\Ann(h)}$ verifies the law of conservation of number. Hence, the dimension of the 
multilocal algebra obtained after deformation $X'$ of $X$ is 
equal to the dimension of the local algebra $\dim\frac{\B^\C}{\Ann(h)}$ before the deformation. 
This permits to extend continuously the bilinear form $<\,,\,>_{h,J}$ from the algebra $\frac{\B}{\Ann(h)}$
to its deformation. By nondegeneracy of the form $<\,,\,>_{h,J}$, its signature is unchanged by a small deformation. This gives the law of conservation of number for the signature of $<\,,\,>_{h,J}$ when adding the signatures for all (real or complex) singular points of $X'$ appearing after deformation.  Note that from the tangency condition \eqref{tangent}, it follows that $(f)\subset \Ann(h)$, so only points in $V=f^{-1}(0)$ can contribute to the signature $\sgn<\,,\,>_{\B(X'),h,J}$ after deformation. 
At the end, let us note that complex zeros of $X'$ come in pairs. One verifies that the contribution to the signature of each pair is equal to zero. Hence only real singular points of $X'$ belonging to $V$ contribute. 
The law of conservation of number (in the real case) for the formula $\sgn(\B,h,J)$ follows. 

The final step in proving the formula in the case $(n+1)$ odd is to adjust the constant $\sgn(\A,\Hess(f))+K_\pm$. This is difficult in the odd case. We will come back to it in subsection \ref{7.4}.

\subsection{$(n+1)$ even case}
In the $(n+1)$ even case Theorem \ref{Xavier} says that in the complex case, up to a constant, the index is given by $\dim\frac{\B^\C}{(f)}$.  There is no natural bilinear form on  $\frac{\B^\C}{(f)}$.
We consider a non-degenerate bilinear form on $\frac{\B^\C}{\Ann(h)}$. We stratify the space of bilinear vector fields by the dimension of the ideal $(h)$ in the algebra $\A$.  The signature $\sgn(h(X),J(X))$ verifies the law of conservation of number in restriction to each stratum. We show that when changing the stratum the jump in $\sgn(\B,h,J)$ is equal to the jump in $\sgn(\A,h,\Hess(f)).$ The two jumps hence compensate in the index formula \eqref{even}. In order to show the equality of the jumps it is sufficient to study the place where all strata meet i.e. the stratum of highest codimension. One has the highest codimension for the Hamiltonian vector field $X_f$ given by \eqref{Xf}, when $h=0$. Note that in this case the two algebras $\A$ and $\B$ coincide and $J(X)=\Hess(f)$.   

In this case it is very easy to determine the constant (independent of the vector field) adjusting the  signature formula with index. For that purpose, one studies the Hamiltonian vector field $X_f$. Note that the Hamiltonian vector field $X_f$ is tangent to all fibers $V_\epsilon=f^{-1}(\epsilon)$. Moreover, it has the same behavior on the boundary $V_\epsilon\cap B$, for $\epsilon\ne0$  as on $V\cap B$.
The Hamiltonian vector field $X_f$ has no zeros on $V_\epsilon=f^{-1}(\epsilon)$, for $\epsilon\ne0$.
Hence $\Ind_{V_\pm}(X_f)=0$.
On the other hand $\sgn(\B,h(X),J(X))-\sgn(\A,h,Hess)=0$, as $\A=\B$ and $J=\Hess$. It follows that no correction term has to be added to $\sgn(\B,h(X),J(X))-\sgn(\A,h,Hess)$ in order to obtain the formula for $\Ind_{V_\pm,0}(X)$.

\subsection{Why is $\Ind_{V_+,0}(X)=\Ind_{V_-,0}(X)$ in the $(n+1)$ even case and not in the odd case?}

We explain here why $\Ind_{V_+,0}(X)=\Ind_{V_-,0}(X)$ in the $(n+1)$ even case and not in the odd case.
Note first that the index of a vector field in the ambient space is an even function if the dimension $(n+1)$ of the ambient space is even and is an odd function if $(n+1)$ is odd. We next use Morse theory.
Consider the vector field $\nabla f.$ By Morse theory, the Euler characteristic $\chi$ verifes:
\begin{equation}\label{chi}
\left.
\begin{array}{rcl}
  \chi(V_+)& =  &   1+\Ind(\nabla f)\\
 \chi(V_-) &=   & 1+\Ind(-\nabla f).
\end{array}
\right.\end{equation}
Here $\chi(V_{+})$ is the Euler characteristic of $V_\epsilon\cap B$, for $\epsilon>0$ small. The value $\chi(V_{-})$ is defined analogously. 

If $(n+1)$ is even, then $\Ind(\nabla f)=\Ind(-\nabla f)$, so $\chi(V_+)=\chi(V_-)$ and $\Ind_{V_+,0}(X)=-\Ind_{V_-,0}(X)$.

If $(n+1)$ is odd, then $\Ind(-\nabla f)=-\Ind(\nabla f)$, so $\chi(V^+)-1=-(\chi(V_-)-1)$ and 
$\Ind_{V_-,0}(X)=2-\Ind_{V_+,0}(X)$.
\subsection{Adjusting the constant $K$ in the $(n+1)$ odd case}\label{7.4}
In order to complete the sketch of proof of the main theorem, we have to explain how do we calculate the constant $K_\pm$ appearing in the $(n+1)$ odd case \eqref{odd}.

As shown previously, the two signature terms in \eqref{odd} calculate the GSV-index up to a constant independent of the vector field. In order to determine the constant, for each $V=f^{-1}(0)$, one has to take a vector field tangent to $V$, having an algebraically isolated singularity at the origin. 
Contrary to the situation in the $(n+1)$ even case, in the odd case, there is no such natural vector field. As in \cite{G}, we use the family of vector fields
\begin{equation}\label{Xft}
X_t=(f-t)\frac{\partial}{\partial x_0}\sum_{i=1}^{(n+1)/2}[f_{2i}\frac{\partial}{\partial x_{2i-1}}-f_{2i-1}\frac{\partial}{\partial x_{2i}}]
\end{equation}
 in generic coordinates. 
 The local algebra is $\B=\B(X_0)=\frac{\mathcal{A}_{\R^{n+1},0}}{(f,f_1,f_2,\ldots,f_n)}.$
Note that $X_t$ is tangent to $V_t=f^{-1}(t)$, for any $t$. 
More precisely, $X_t(f)=f_0 f$, so $h=f_0$ is the cofactor of $X_t$.
Hence, by definition 
\begin{equation}
\Ind_{V_+}(X,0)=\sum_{p_t\in V_t\cap B}\Ind_{V_t,p_t}(X_t).
\end{equation}
But, these indices are calculated using the multilocal algebra $\B_t$ and the relative Jacobian $\frac{J(X_t)}{f_0}$. That is, the index is given by the signature of the bilinear form $<\,,\,>_{\B_t}$, for $t\ne0$ small.
For the index $\Ind_{V_+,0}(X_0)$, we have to take it positive and for $\Ind_{V_-,0}(X_0)$ it is negative. 
 The problem is that this form degenerates on $\Ann_{\B_t}(f_0)$, for $t=0$.

We prove in \cite{GGM2} a general result for algebras $\A=\A(f)$ and $\B=\B(X)$ associated to  a vector field $X$ tangent to $V=f^{-1}(0)$ i.e. verifying \eqref{tangent}:

\begin{lemma}\label{iso}
There exists a natural isomorphism between the algebras $\Ann_{\B}(h)$ and $\Ann_\A(f)$.
\end{lemma}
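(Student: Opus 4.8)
The plan is to exhibit an explicit map, identify it as an algebra isomorphism, and use the complete intersection structure to verify the relevant properties. First I would set up the ambient ring $\mathcal{A}=\mathcal{A}_{\R^{n+1},0}$ and write $I_f=(f_0,\ldots,f_n)$, $I_X=(X^0,\ldots,X^n)$, so that $\A=\mathcal{A}/I_f$ and $\B=\mathcal{A}/I_X$. The tangency relation $X(f)=hf$ is the crucial input: differentiating (Euler-style) or rather rewriting it, one has $\sum_i X^i f_i = hf$ in $\mathcal{A}$, i.e. $hf \in I_X$. The natural candidate for the isomorphism $\Ann_\B(h)\to\Ann_\A(f)$ is, at the level of the ambient ring, multiplication by $f$ (sending a representative $g$ with $gh\in I_X$ to $gf$), combined with a change of generators; and in the reverse direction, a division by $f$. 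The point is that the map $b\mapsto b\cdot f$ does not obviously land in $\A$ in a well-defined way, so one should rather think of the isomorphism as coming from the identification of both annihilators with a common object — for instance both are, via the respective socle generators $J(X)$ and $\Hess(f)$, dual (as $\mathcal{A}$-modules) to quotients $\B/(h)$ and $\A/(f)$, and then Lemma~\ref{iso} follows from a duality/linkage statement.

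The key steps, in order, would be: (1) Record that $\B$ and $\A$ are Gorenstein Artinian local rings (finite complete intersections), hence each has a one-dimensional socle, generated respectively by $J(X)$ and $\Hess(f)$; this gives perfect pairings $\B\times\B\to\R$ and $\A\times\A\to\R$ after choosing socle-detecting functionals. (2) For any element $a$ of such a Gorenstein ring $R$ with socle generator $s$, the annihilator $\Ann_R(a)$ is the exact orthogonal complement of the principal ideal $(a)$ under the pairing, so there is a canonical isomorphism $\Ann_R(a)\cong (R/(a))^\vee$ of $R$-modules. Apply this with $R=\B$, $a=h$ and with $R=\A$, $a=f$. (3) Establish a canonical isomorphism $\B/(h)\cong\A/(f)$ of $\mathcal{A}$-algebras: indeed $\B/(h)=\mathcal{A}/(X^0,\ldots,X^n,h)$ and $\A/(f)=\mathcal{A}/(f_0,\ldots,f_n,f)$, and the tangency relation together with the hypothesis that $X$ is tangent to $V$ forces these two ideals to coincide (this is already used implicitly in the excerpt, where at smooth points $\Ann(h)=(f)$ and in the odd-case argument one writes $\dim\frac{\B^\C}{(h)}$ against $\dim\frac{\A^\C}{(f)}$). (4) Combine (2) and (3) to get $\Ann_\B(h)\cong(\B/(h))^\vee\cong(\A/(f))^\vee\cong\Ann_\A(f)$, and check that dualizing twice returns an honest $\mathcal{A}$-module isomorphism, which is automatically an algebra isomorphism because both annihilators, being ideals in local Artinian rings with the induced multiplication, are determined as non-unital algebras by their module structure together with the ambient multiplication inherited from $\mathcal{A}$.

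The main obstacle I expect is step (3): showing that $(X^0,\ldots,X^n,h)=(f_0,\ldots,f_n,f)$ as ideals of $\mathcal{A}$ in general, not merely at smooth points. One inclusion should come from differentiating the tangency relation $X(f)=hf$ with respect to each $x_j$, giving $\sum_i X^i f_{ij} + \sum_i X^i_j f_i = h_j f + h f_j$; modulo $I_X+(f)$ this reads $hf_j \equiv \sum_i X^i_j f_i \pmod{I_X+(f)}$, which does not immediately collapse, so the argument likely needs the algebraically-isolated-singularity hypotheses on both $f$ and $X$ together with a dimension count (both quotients have the same length by Theorem~\ref{Xavier}'s complex version, which would upgrade an inclusion to equality). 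Alternatively, one sidesteps step (3) entirely and builds the isomorphism directly: send the class of $g\in\Ann_\B(h)$ to the class of $g\cdot(\text{something})$ where the tangency relation lets one "cancel" $h$ against a factor involving $f$; the delicate point there is well-definedness, i.e. that $gh\in I_X$ and $g'h\in I_X$ with $g-g'\in I_X$ forces the images to differ by an element of $I_f$. Either route, the heart of the matter is leveraging $hf\in I_X$ from tangency to trade the generator $h$ of $\B$ for the generator $f$ of $\A$, and the cleanest writeup is the Gorenstein-duality one above, with the length equality supplied by the already-cited complex formula.
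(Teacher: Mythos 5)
Your route (socle pairings plus Gorenstein duality) is not the paper's, and it has a genuine gap exactly where you anticipated trouble: step (3) is false in the generality in which you set it up. The tangency relation \eqref{tangent} alone does not force $(X^0,\ldots,X^n,h)=(f_0,\ldots,f_n,f)$. Take $n+1=2$ and the Hamiltonian field $X_f=f_1\frac{\partial}{\partial x_0}-f_0\frac{\partial}{\partial x_1}$, so that $X_f(f)=0$ and $h=0$: then $(X^0,X^1,h)=(f_0,f_1)$, while $(f_0,f_1,f)$ is strictly larger whenever $f$ is not quasi-homogeneous (by Saito's theorem $f\notin(f_0,f_1)$ then, e.g.\ for $f=x^5+y^5+x^3y^3$), so $\B/(h)\not\cong\A/(f)$. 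Your proposed rescue via Theorem \ref{Xavier} also does not work: the complex formula contains $\dim\frac{\BC}{(h)}$ and $\dim\frac{\AC}{(f)}$ as separate terms and asserts no equality of these lengths (and in the example above they differ). This incidentally shows that Lemma \ref{iso} has to be read with the hypotheses and context of \cite{GGM2}, where it is applied to the family $X_t$ with $\B=\mathcal{A}_{\R^{n+1},0}/(f,f_1,\ldots,f_n)$ and $h=f_0$ in generic coordinates --- there your ideal identity holds trivially --- but a derivation of (3) from tangency alone, as you attempt, cannot succeed. A secondary issue: even granting (3), your step (4) produces an isomorphism of modules; $(\B/(h))^\vee$ carries no natural product, so the assertion that the composite $\Ann_\B(h)\to\Ann_\A(f)$ is ``automatically'' an algebra isomorphism is not justified as written.

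The paper's proof is instead the direct construction you only sketch in your closing sentences and set aside: the isomorphism is $g\mapsto k$, where $k$ is determined by the relation $gh=fk$ (read modulo the defining ideals, using \eqref{tangent} to trade the cofactor $h$ for $f$). No identification $\B/(h)\cong\A/(f)$ and no duality are needed; the real work is precisely the point you flagged, namely well-definedness (that changing $g$ by an element of the ideal defining $\B$, or changing the decomposition of $gh$, alters $k$ only by an element of the ideal defining $\A$), which in the setting of \cite{GGM2} is checked using that $f$ is a nonzerodivisor modulo $(f_1,\ldots,f_n)$ in generic coordinates. So to repair your write-up you should abandon step (3) and carry out that direct argument rather than the duality one.
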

\begin{proof}
The isomorphism is given by the mapping $g\mapsto k$ if $gh=fk$.
\end{proof}

Lemma \ref{iso} permits to transport all higher order signature vanishing in $\Ann_{\B_0}$ to a natural algebra $\A$. We apply it to our vector field $X_t$. When looking at the signature of the form $<\,,\,>_{\B_t}$, we have one part which does not degenerate. It is the part in $\frac{\B}{\Ann_\B(f_0)}$. The bilinear form $<\,,\,>_{\B_t}$ degenerates at different orders on different parts of
$\Ann_{\B_t}(f_0)$.  By Lemma \ref{iso}, we transport the bilinear form $<\,,\,>_{\B_t}$ to a bilinear form in the coordinate independent algebra  $\Ann_\A(f)$. Note that in $\B_t$, we have $f=t$, so degeneration of $<\,,\,>_{\B_t}$ at different orders of $t$ corresponds to multiplication by $f$ in the algebra $\Ann_{\A}(f)$. 
For more details see \cite{GGM2}.

\section{Open problems}

\subsection{Geometric interpretation of the signatures $\sigma_i$. Filtration of contributions to the Euler characteristic of the generic fiber}
In Theorem \ref{main} appear higher order signatures $\sigma_i$ defined in \eqref{sigma}.  These signatures are associated to the singularity $f$ alone. We would like to give a geometric interpretation of these numbers. We believe that they correspond to parts of the Euler characteristic of the generic fiber, filtered by the speed of arrival at the singular fiber.

Let us be more precise. In \cite{T} Teissier studies polar varieties in the complex case.
He considers a germ of a function $f:(\C^2,0)\to(\C,0)$ having an isolated critical point at the origin.
He considers a Morsification $f_s=f-s x_0$ of $f$ in generic coordinates $(x_0,\ldots,x_{n})$. Its critical points are given by 

\begin{equation}
f_0-s=f_1=\cdots=f_n=0.
\end{equation} 
Let $\Gamma$ be the curve given by $f_1=\cdots=f_n=0$. The curve $\Gamma$ is called \emph{polar curve}. In general it has several branches $\Gamma=\cup_{q=1}^\ell\Gamma_q$. By Morsification, the critical point $0$ of $f$ decomposes in several critical points arriving along the polar curve to the origin. For each value of $s\ne0$,  the critical points of $f_s$ belong to $f_0^{-1}(s)\cap\Gamma$. 
Each critical point corresponds to a vanishing cycle contributing to $H_n(V_{t_0})$.
In \cite{T}, Teissier observed that, after Morsification, critical points arrive at different speed at the origin. 
More precisely, each component $\Gamma_q$ of the polar curve $\Gamma$ is parametrized as
\begin{equation}\label{Gamma}
\left.
\begin{array}{rcl}
  x_0(t_q)& =& t_q^{m_q}+\cdots \\
  \ldots&\ldots&\ldots\\
 x_n(t_q) &= &\lambda_n t_q^{k_{q,n}} +\cdots
\end{array}
\right.\end{equation}
where $m_q\leq k_{q,i}$. 
In \cite{T}, Teissier calculates the exponent $m_q$. 
One can use $x_0$ (or the corresponding critical value) as a measure for the speed of approach of a critical point in the Morsification. 
One can filter the $n$-th group of homology of the generic fiber $H_n(f^{-1}(t))$ i.e. the space of vanishing cycles, by  the speed of arrival of the corresponding critical points. 
We believe that this filtration is given by  the filtration \eqref{filtration} or rather its complex counterpart.
The dimensions 
\begin{equation}
0=\dim\frac{\A}{K_0}\leq\dim\frac{\A}{K_1}\leq\dim\frac{\A}{K_{\ell+1}}\leq\dim\A
\end{equation}
 would measure the dimension of the space of vanishing cycles arriving at a certain minimal speed. 
 
 The signatures $\sigma_i$ would be the real counterpart. The signature $\sigma_0$ is a signature  of a bilinear form on $\A$. It measures the Euler characteristic $\chi(V_t)$. We believe that the signatures $\sigma_i$ that we introduced measure the filtered part of the Euler characteristic of the generic fiber $\chi(V_t)$, the filtration being done  by taking only the part of the topology of the fiber arriving at a certain minimal speed.
 We hope to be able to address this problem in a continuation of our research.
 
 \subsection{Generalization to higher codimension}
 
 In \cite{BEG} Bothmer, Ebeling and  G\'omez-Mont generalized G\'omez-Mont's formula (Theorem \ref{Xavier}) to a  formula for the index of a vector field on an isolated complete intersection singularity in the complex case. 
 A natural problem would be to extend the result to the real case. Here, as in our Theorem \ref{main}, one would certainly have to define some bilinear forms on the spaces studied in \cite{BEG}.

\bibliographystyle{plain}

\end{document}